\renewenvironment{bmatrix}{\begin{ytableau}}{\end{ytableau}}
\newtheorem{thm}{Theorem}[section]
\theoremstyle{definition}
\newtheorem{con}{Conjecture}[section]
\begin{document}

\title{\textbf{On Total Coloring of Some Classes of Regular Graphs}}

\author[1]{S. Prajnanaswaroopa, J. Geetha and K.Somasundaram}
\author[2]{Hung Lin-Fu}
\author[3]{Narayanan. N}

\affil[1]{Department of Mathematics, Amrita School of Engineering-Coimbatore\\ Amrita Vishwa Vidyapeetham, India.

sntrm4@rediffmail.com,\{s\_prajnanaswaroopa, j\_geetha, s\_ sundaram\}@cb.amrita.edu}
\affil[2]{Department of Applied Mathematics, National Chiao Tung University, Hsinchu 30010, Taiwan. 

hlfu@math.nctu.edu.tw}
\affil[3]{Department of Mathematics, Indian Institute of Technology Madras, Chennai, India. 

naru@iitm.ac.in}
\date{}
\maketitle

\maketitle
\noindent\textbf{Abstract:} In this paper, we have obtained the total chromatic number of some classes of Cayley graphs, odd graphs and mock threshold graphs. \\

\noindent \textbf{Keywords:} Regular graphs; Independent sets; Total coloring; Powers of Cycles; Cayley Graphs; Chordal Graphs; Odd Graphs.
\maketitle

\section{Introduction}


\indent All the graphs considered here are finite, simple and undirected.\\
Let $G=(V(G),E(G))$ be a graph with the sets of vertices $V(G)$ and edges $E(G)$ respectively. A \textit {total coloring} of \textit{G} is a mapping $ f:V(G)\cup E(G) \rightarrow C $, where $C$ is a set of colors, satisfying the following three conditions (a)-(c):\\

 (a) $ f(u)\neq f( v)$ for any two adjacent vertices $ u, v\in V(G) $

 (b) $ f( e)\neq f( e')$ for any two adjacent edges $ e,  e' \in E(G)$ and

 (c) $ f( v)\neq f( e)$ for any vertex $ v\in V(G)$ and any edge $ e\in E(G)$ incident to \textit{v}.\\

The \textit{total chromatic number}  of a graph \textit{G}, denoted by $ \chi''(G) $, is the minimum number of colors that suffice in a total coloring. It is clear that $ \chi''(G) \geq \Delta+1 $, where $\Delta$ is the maximum degree of \textit{G}.  Behzad \cite{BEZ} and Vizing \cite{VGV} independently conjectured (Total Coloring Conjecture (TCC)) that  for every graph $G$, $\chi''(G)\leq \Delta+2 $.  The total coloring conjecture is a long standing conjecture and has defied several attempts in a complete proof. It is also proved that the decidability algorithm for total coloring is NP-complete even for cubic bipartite graph \cite{SAN}. But still, a lot of progress has been made in proving the TCC. It is easily seen to be true for complete graphs, bipartite, complete multipartite graphs. It was also showed to be true for all graphs with degree $\Delta\le5$ and $\Delta\ge n-5$, where $n$ is the number of vertices, using techniques like enlarge-matching argument and fan recoloring process \cite{YAP}. For planar graphs, TCC is proved for all $\Delta\neq6$ using discharging and charging methods. The total coloring conjecture has also been proved for several other classes of graphs.  Good survey of techniques and other results on total coloring can be found in Yap \cite{YAP}, Borodin \cite{OVB} and Geetha et al. \cite{GNS}
 \section{Total Colorings of Cayley Graphs}
   Cayley graphs are those whose vertices are the elements of groups and adjacency relations are defined by subsets of the
    groups.  Let $\Gamma$ be a multiplicative group with identity 1. For $S\subseteq \Gamma, 1\notin S \text{ and }
    S^{-1}=\{s^{-1}:s\in S\}=S$ the \textit{Cayley Graph} $X=Cay(\Gamma, S)$ is the undirected graph having vertex set
    $V(X)=\Gamma$ and edge set $E(X)=\{(a,b): ab^{-1}\in S\}$.
 The Cayley graph associated with $\Gamma=\mathbb{Z}_n$, the group of integers modulo $n$ under addition operation, is called a \textit{circulant graph}. Note that the \textit{powers of cycle graph} is a circulant graph with the generating set $S=\{1,2,\ldots,k,n-k,\ldots,n-2,n-1\}.$ All Cayley graphs are vertex transitive.\\
  
 In other words,  given a sequence of positive integers $1\leq d_1<d_2<...<d_l\leq  \lfloor \frac{n}{2} \rfloor$, the circulant graph
    $G=C_n(d_1,d_2,...,d_l)$ has vertex set $V=Z_n=\{0,1,2,...,n-1\}$, two vertices $x$ and $y$ being adjacent iff $x=(y\pm
    d_i) \mod n$ for some $i, 1\leq i\leq l$ and a graph is a power of cycle, denoted $C^k_n$, $n$ and $k$ are integers, $1\leq k<\lfloor\frac{n}{2}\rfloor$, if
    $V(C^k_n)=\{v_0,v_1,...,v_{n-1}\}$ and $E(C^k_n)=E^1 \cup E^2 \cup ...\cup E^k$, where
    $E^i=\{e_0^i,e_1^i,...,e^i_{n-1}\}$ and $e_j^i=(v_j,v_{(j+i) \mod \ n })$ and $0 \leq j \leq n-1$, and $1\leq i\leq k$.

    Campos and de Mello \cite{CADM2} proved that $C_n^2, n\neq 7$, is type-I and $C_7^2$ is type-II. They \cite{CADM1} 
    verified the TCC for power of cycle $C_n^k, n$ even and
    $2<k<\frac{n}{2}$ and also showed that one  can obtain a
    $\Delta(C_n^k)+2$-total coloring for these graphs in polynomial time. They also proved that $C_n^k$ with $n\cong 0 \mod (\Delta(C_n^k)+1)$ are
    type-I and they proposed the following conjecture.  
    \begin{con}
    Let $G=C_n^k$, with $2\leq k< \lfloor \frac{n}{2}
        \rfloor$. Then,

        $\chi''(G)=\begin{cases}\Delta(G)+2, & \text{ if }k>\frac{n}{3}-1    \text{ and } n  \text{ is \ odd}\\ \Delta(G)+1,  &
        \text{ otherwise}.  
    \end{cases}$

    \end{con}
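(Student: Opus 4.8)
The plan is to split along the two branches of the claimed formula. Write $\Delta=2k$, so $\Delta+1=2k+1$; the graph $C_n^k$ has $n$ vertices, $nk$ edges, hence $n(k+1)$ elements to be coloured, and the hypothesis $k<\lfloor n/2\rfloor$ forces $n\ge 2k+2$. In the \emph{dense odd} branch ($n$ odd and $k>\frac n3-1$, equivalently $n<3(k+1)$) the target is the lower bound $\chi''(C_n^k)\ge\Delta+2$, the upper bound $\chi''(C_n^k)\le\Delta+2$ being the TCC for these graphs; in the complementary branch the target is an explicit $(\Delta+1)$-total colouring.

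For the lower bound I would argue by an element count. In any total colouring each colour class is a matching $M$ together with an independent vertex set $I$ disjoint from $V(M)$. Vertices of an independent set of $C_n^k$ lie at pairwise cyclic distance $\ge k+1$, so $|I|(k+1)\le n$, and since $n<3(k+1)$ this forces $|I|\le 2$; as $n$ is odd, a matching on the other $n-|I|$ vertices has at most $\lfloor(n-|I|)/2\rfloor$ edges, and in each of the cases $|I|\in\{0,1,2\}$ one checks $|M|+|I|\le\frac{n+1}{2}$. Hence every total colouring uses at least $\big\lceil 2n(k+1)/(n+1)\big\rceil$ colours, and since $n\ge 2k+2$ gives $2n(k+1)>(2k+1)(n+1)$, we have $2n(k+1)/(n+1)>2k+1$ and this ceiling is at least $2k+2$; thus $\chi''(C_n^k)\ge\Delta+2$. (The companion bound $\chi''\le\Delta+2$ is classical when $n-\Delta\le5$, and for the remaining dense odd cases I would obtain it from the construction below run with one extra colour.)

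For the $(\Delta+1)$-colouring the model case is $2k+1\mid n$, which is already known: group $\mathbb Z_n$ into its $2k+1$ residue classes $C_0,\dots,C_{2k}$ modulo $2k+1$; since $|d-d'|\le 2k<2k+1$ for $d,d'\in\{\pm1,\dots,\pm k\}$, each vertex has exactly one neighbour in every class other than its own, so the edges of $C_n^k$ between distinct $C_a,C_b$ form a perfect matching and there are none inside a class. Colouring every vertex of $C_a$ by $2a\bmod(2k+1)$ and every edge between $C_a$ and $C_b$ by $(a+b)\bmod(2k+1)$ is then a proper $(2k+1)$-total colouring. For general $n$ I would write $n=(2k+1)q+r$ with $0\le r<2k+1$, apply the model colouring on the arc of the first $(2k+1)q$ vertices, and repair the ``seam'' consisting of the last few cosets and the $O(k)$ vertices near the wrap-around by fan and Kempe-chain recolourings, exploiting the structural slack of each sub-regime (when $n$ is even, $r$ is odd and an extra near-perfect matching is free; when $n$ is odd with $k\le\frac n3-1$, independent triples exist and create room to absorb the seam).

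I expect this seam repair to be the main obstacle. The construction carries essentially no spare colour, so the tight instances---$r$ close to $2k+1$, and $k$ just below $\frac n3$ on the odd side---will likely resist a single uniform recolouring lemma and may need a genuinely different periodic pattern, or a separate finite check of small $(n,k)$; indeed, reconciling the parity of $n$ with the period $2k+1$ of the colour cycle is exactly what produces the split in the conjectured value.
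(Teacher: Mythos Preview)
The statement you are attempting to prove is not a theorem of the paper but an open conjecture, attributed there to Campos and de Mello; the paper supplies no proof, only partial progress (Theorems~2.1--2.3 for special residue classes of $n$, and a citation to Zorzi et al.\ for even $k$ and large $n$). So there is no ``paper's own proof'' to compare with, and your proposal should be read as an attack on an open problem.

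Your lower bound for the dense odd branch is correct and clean: the counting of total-colour classes via $|M|+|I|\le (n+1)/2$ goes through exactly as you wrote. The genuine gaps lie elsewhere. First, the matching upper bound $\chi''\le\Delta+2$ in the dense odd case is not ``classical'' in the generality you need: the result for $\Delta\ge n-5$ covers only $n\le 2k+5$, while your branch allows $n$ up to $3k+2$, and the paper itself only reaches $\chi''\le 2k+3$ (Theorem~2.3) for odd $n$ of the relevant shape; saying you would ``run the construction below with one extra colour'' is not a proof, since that construction is precisely what you have not completed. Second, and more seriously, in the $(\Delta+1)$-branch you only handle the model case $2k+1\mid n$ (already in Campos--de Mello) and then defer the general case to an unspecified ``seam repair'' by fan and Kempe-chain recolourings, which you yourself flag as the main obstacle and expect to fail uniformly at the tight parameters. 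That is an honest assessment, but it means the proposal is a plan rather than a proof: the entire difficulty of the conjecture is exactly this seam, and nothing in your write-up indicates how to cross it.
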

Recently, Zorzi et al. \cite{ZOR} proved the conjecture for certain classes of power of cycle graphs. In particular, for even $k$ and large $n$ the conjecture is true.

 A latin square is an $n\times n$ array consisting of $n$ entries of numbers (or symbols) with each row and column containing only one instance of each element. This means the rows and columns are permutations of one single $n$ vector with distinct entries. A latin square is said to be commutative if it is symmetric.  A latin square containing numbers is said to be idempotent if each diagonal element contains the number equal to its row (column) number. In addition, if the rows of the latin square are just cyclic permutations (one-shift of the elements to the right) of the previous row, then the latin square is said to be circulant (anti-circulant, if the cyclic permutations are actually left shifts), the matrix (corresponds to the latin square) can be generated from a single row vector.  The latin square

\begin{center}

\begin{tabular}{|c|c|c|c|c|c|c|}\hline1&k+2&2&k+3&\ldots&2k+1&k+1\\\hline k+2&2&k+3&3&\ldots&k+1&1\\\hline\ldots&\ldots&\ldots&\ldots&\ldots&\ldots&\ldots\\\hline\ldots&\ldots&\ldots&\ldots&\ldots&\ldots&\ldots\\\hline k+1&1&k+2&2&\ldots&k&2k+1\\
\hline
\end{tabular}

\end{center}

\noindent is    anti-circulant, commutative and idempotent.  
The entries of the square are as follows:
\[ L=(l_{ij})=\begin{cases} m,\ \text{ if }\ i+j=2m\\k+1+m, \ \text{otherwise.}\end{cases}\]

From the above, it can be easily seen that the latin square corresponding to the matrix $L$ is commutative, idempotent and also anti-circulant.

 
 Campos and de Mello \cite{CADM2}, proved the TCC for the powers of cycles of even order. In the following theorems,  using latin squares we prove  some classes of powers of cycles  of even order are Type 1. 
 
 \begin{thm}
 Let $G$ be a power of cycle graph $C_n^k$ with $k=\frac{n-2}{4}+k'$ with $n,k,k'$ being non-negative integers with $n\geq4$, $n$ even and $gcd(n,x)=1$,   $\frac{n-2}{4}<x<k'$. Then, $G$ is a type I graph.
 \end{thm}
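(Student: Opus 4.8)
The plan is to produce an explicit total colouring of $G=C_n^k$ with $\Delta(G)+1=2k+1$ colours; since every total colouring uses at least $\Delta+1$ colours, this certifies that $G$ is type~I. I will use the standard reformulation: for a $2k$-regular graph, a $(2k+1)$-total colouring is the same thing as a partition of $V(G)\cup E(G)$ into $2k+1$ classes, each consisting of an independent set of vertices together with a matching that perfectly matches the remaining vertices, the matchings partitioning $E(G)$; equivalently, around every vertex the vertex and its $2k$ incident edges together display all $2k+1$ colours.

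First I would build the vertex part. Since $n$ is even, $n/2$ is an integer, and the hypothesis $k=\tfrac{n-2}{4}+k'$ is exactly the identity $n/2=2k-2k'+1$, so $n/2$ is odd, while $k<\tfrac n2$ gives $n/2\ge k+1$. Hence the antipodal matching $M=\{\,v_jv_{j+n/2}:j\in\mathbb{Z}_n\,\}$ joins vertices at distance $n/2\ge k+1$ in $G$, so each pair $P_c=\{v_c,v_{c+n/2}\}$ is an independent set; a short computation shows that under the same hypothesis some vertex of $P_c$ is adjacent to some vertex of $P_{c'}$ for \emph{every} $c\neq c'$, so the $n/2$ pairs must receive $n/2$ pairwise distinct colours. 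I assign the pairs $n/2$ of the $2k+1$ colours and leave the remaining $2k'$ colours unused on vertices; the edge count $\tfrac n2(\tfrac n2-1)+2k'\cdot\tfrac n2=\tfrac n2\cdot 2k=nk=|E(G)|$ then shows consistently that each of the $2k'$ spare colours should carry one perfect matching of $G$, and this identity holds precisely because $n/2=2k-2k'+1$.

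For the edge part I would pass to the quotient obtained by contracting each pair $P_c$; this turns $G$ into a $2k$-regular circulant multigraph $Q$ on $\mathbb{Z}_{n/2}$, whose underlying simple graph (using $k\ge(n/2-1)/2$) is $K_{n/2}$, with exactly $k'$ of its $k-k'$ difference classes occurring with multiplicity two. Now I bring in the displayed anti-circulant, commutative and idempotent Latin square $L=(l_{ij})$ of order $2k+1=\tfrac n2+2k'$, which by itself is a $(2k+1)$-total colouring of $K_{2k+1}$ (vertex $i$ coloured $l_{ii}=i$, edge $ij$ coloured $l_{ij}$). Identifying $\mathbb{Z}_{n/2}$ with a suitable $n/2$-subset of the index set of $L$, I colour the first copy of each difference class of $Q$ by the corresponding entries of $L$ and lift each coloured quotient edge back to the two (respectively four, for a doubled class) edges of $G$ above it; the $2k'$ spare colours together with the anti-circulant pattern of $L$ handle the second copies of the doubled classes. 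The freedom to choose the identification comes from $\gcd(n,x)=1$: multiplication by $x$ gives an isomorphism $C_n^k\cong C_n(x,2x,\dots,kx)$, so $\mathbb{Z}_n$ (hence $\mathbb{Z}_{n/2}$) may be relabelled, and for $x$ in the prescribed range one can arrange that the two copies over each doubled class receive different colours. Finally I would verify properness: commutativity of $L$ makes edge colours orientation-independent, idempotency ($l_{ii}=i$) keeps each vertex colour off its incident edges, the Latin property makes the $2k$ edge colours at a vertex distinct, and a palette count forces all $2k+1$ colours at every vertex, so $\chi''(G)=2k+1=\Delta(G)+1$ and $G$ is type~I.

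The main obstacle is the doubled difference classes in the edge-colouring step: one must colour the second copy of every doubled class so that all local palettes stay rainbow, simultaneously and compatibly with the vertex colours, using only the $2k'$ spare colours and the structure of $L$. This is precisely what makes both the identity $k=\tfrac{n-2}{4}+k'$ (it forces $n/2$ to be odd and pins the number of doubled classes to $k'$) and the existence of an $x$ coprime to $n$ in the stated range (it supplies the relabelling room) indispensable, and I expect this verification to reduce to a pair of short congruence checks modulo $2k+1$ and modulo $n/2$.
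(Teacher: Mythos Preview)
Your approach diverges from the paper's and, as written, has a real gap. The paper does \emph{not} pass to a quotient on $\mathbb{Z}_{n/2}$; it decomposes $C_n^k$ edge-disjointly as $C_n^{(n-2)/4}\cup H$, where $H$ is the circulant on $\mathbb{Z}_n$ with connection set $\{\pm((n-2)/4+1),\dots,\pm k\}$. The base graph $C_n^{(n-2)/4}$ is totally coloured with $n/2$ colours by writing the order-$n/2$ commutative idempotent anti-circulant Latin square down the diagonal band of the colour matrix (this is the step your antipodal vertex colouring reproduces). For the remaining edges the paper simply invokes Stong's theorem: the hypothesis that some $x$ in the connection set of $H$ satisfies $\gcd(n,x)=1$ forces that connection set to generate $\mathbb{Z}_n$, so $H$ is $1$-factorizable and its edges take exactly $2k'$ fresh colours. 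That is the entire role of the $\gcd$ condition---it is a $1$-factorization hypothesis for $H$, not a relabelling device.

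Your quotient route runs into trouble at exactly the point you flag. First, the contraction of antipodal pairs does not give a $2k$-regular multigraph: each vertex of $G$ has degree $2k$, so each contracted pair has degree $4k$ in $Q$ (every quotient edge has multiplicity $2$ or $4$, not $1$ or $2$). More seriously, when a quotient edge has four preimages, two of those preimages share a vertex of $G$, so a colour lifted from $Q$ cannot be assigned to all of them; you must split them, and you never specify how the $2k'$ spare colours and the ``relabelling by $x$'' accomplish this simultaneously for all $k'$ doubled classes. Saying you ``expect this verification to reduce to a pair of short congruence checks'' is not a proof---and in fact the clean way to colour those extra edges is precisely the paper's: recognise them as the edge set of the circulant $H$ and $1$-factorize $H$ via Stong. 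Once you see that, the quotient detour is unnecessary.
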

 \begin{proof}
 The adjacency matrix of a power of cycle graph $C_n^k$ (or, in fact, any circulant graph) is a symmetric circulant matrix $C=(c_{ij})$ with $1$'s when $i,j$ differ by $s$, where $s\in S$, the generating set of the Cayley graph $C_n^k$. For example, the adjacency matrix of $C_{10}^2$ is
 
 $A(C_{10}^2) = $
 $\begin{bmatrix}0&1&1&0&0&0&0&0&1&1\\
 1&0&1&1&0&0&0&0&0&1\\
 1&1&0&1&1&0&0&0&0&0\\
 0&1&1&0&1&1&0&0&0&0\\
 0&0&1&1&0&1&1&0&0&0\\
 0&0&0&1&1&0&1&1&0&0\\
 0&0&0&0&1&1&0&1&1&0\\
 0&0&0&0&0&1&1&0&1&1\\
 1&0&0&0&0&0&1&1&0&1\\
 1&1&0&0&0&0&0&1&1&0\end{bmatrix}$.\\
 
We know that $\Delta(C_n^k)=2k$. For giving a total $2k+1$ coloring of $C_n^k$ in the case $k=\frac{n-2}{4}$, where $n,k$ are non-negative integers, $n$ is even, we form the color matrix (a matrix which gives the color of the vertices in the diagonals and the color corresponding to edges in the other entries) by first filling the non-zero entries and diagonal entries in the first $(k+1)\times n$ submatrix of the color matrix with the corresponding entries of the first $k+1$ rows of the latin square. The first non-zero entry of the $k+2$-th row of the color matrix is determined by the $k+2$-th entry of the $2$-nd row of the color matrix (as the color matrix is symmetric). The next non zero entries of the $k+2$-th row are determined by the cyclic order of the previous  rows. Similarly we determine the non-zero entries of remaining  rows (the first entry determined by the symmetric property of the color matrix and the next entries determined by the cyclic order of the previous rows). Thus  continuing, we can fill all the entries of the color matrix satisfying the total coloring conditions, giving us a $2k+1$ total coloring. For example, the $(5\times5)$ anti-circulant, commutative and idempotent latin square is

 \begin{center}
\begin{tabular}{|c|c|c|c|c|}\hline1&4&2&5&3\\\hline 4&2&5&3&1\\\hline2&5&3&1&4\\\hline5&3&1&4&2\\\hline3&1&4&2&5 \\
\hline
\end{tabular}
\end{center}
Now, this corresponds to $k=2$ (as $2k+1=5$). Thus, the filled color matrix for $C_{10}^2$ (where $n=10=2(2(2)+1)=2(2k+1)$) is:\\
\begin{center}
 $\begin{bmatrix}1&4&2&0&0&0&0&0&5&3\\
 4&2&5&3&0&0&0&0&0&1\\
 2&5&3&1&4&0&0&0&0&0\\
 0&3&1&4&2&5&0&0&0&0\\
 0&0&4&2&5&3&1&0&0&0\\
 0&0&0&5&3&1&4&2&0&0\\
 0&0&0&0&1&4&2&5&3&0\\
 0&0&0&0&0&2&5&3&1&4\\
 5&0&0&0&0&0&3&1&4&2\\
 3&1&0&0&0&0&0&4&2&5\end{bmatrix}$
 \end{center}
 
\noindent which is seen to a $2k+1=5$-total coloring. Note that $0$-s do not constitute colors in the above color matrix. 
\\
 
 
 
 
 
 We see that the powers of cycles graph $C_n^{k}$, $k=\frac{n-2}{4}+k'$ is a disjoint union of $C_n^{\frac{n-2}{4}}$ and a circulant graph.  When $\frac{n-2}{4}<k<\frac{n}{2}$ the edge disjoint graph that is added to $C_n^{\frac{n-2}{4}}$ is a class I graph (as the edge disjoint graph added is a circulant graph, which is edge colorable by $\Delta$ colors, if the generating set of the graph is also a generating set of the group, which is guaranteed if $gcd(n,x)=1$, where $\frac{n-2}{4}<x<k'$ ; the graph is edge colorable with $\Delta$ colors, where $\Delta$ be the maximum degree of the edge disjoint graph added \cite{STO}). Now, since in a type I total coloring of $C_n^{\frac{n-2}{4}}$, we have given $\frac{n}{2}$ colors to the vertices and  $\chi(C_n^k)\leq \frac{n}{2}$ (since the vertices can always be arranged into independent sets as $[0,\frac{n}{2}],[1,\frac{n}{2}+1],\ldots,[k,n-1]$ provided $k<\frac{n}{2}$), we need to only give a coloring to the edges of the remaining (added) circulant graph, which is seen to require only $\Delta$ extra colors. Thus the total coloring of the graph $C_n^k$ is again seen to require only $2\left(\frac{n-2}{4}\right)+1+\Delta=2\left(\frac{n-2}{4}\right)+2k'+1=2k+1$  colors.
 
 \end{proof}
\begin{thm}
Let $G$ be a powers of cycle graph $C_n^k$ with $n=s(2m+1)$, with $s$ being even and $2m+1-i=k\,,1\le i\le k+1$. Then the graph $C_n^k$ is total colorable with $2k+1$ colors.
\end{thm}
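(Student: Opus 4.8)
The plan is to adapt the ``color matrix'' construction from the proof of the previous theorem. Write $G=C_n^k$, so $\Delta(G)=2k$ and a total coloring with $2k+1$ colors is a Type~I coloring; note that the numerical condition $2m+1-k=i$ with $1\le i\le k+1$ is equivalent to $m\le k\le 2m$, so the band of a vertex reaches at most $2m$ steps to each side and therefore meets only its own block and the two adjacent blocks when the $n=s(2m+1)$ vertices are split into $s$ consecutive blocks of size $2m+1$. I would build a symmetric $n\times n$ color matrix $C=(c_{pq})$ in which $c_{pq}$ is the color of edge $v_pv_q$ for circular distance $|p-q|\le k$ and $c_{pp}$ is the color of $v_p$, using the anti-circulant, commutative, idempotent latin square $L$ of order $2k+1$: fill the first $k+1$ rows from the first $k+1$ rows of $L$, placed so that the diagonal entry lies in the middle of the band, and then propagate downward, the first entry of each band being forced by symmetry and the rest by the cyclic shift of the rows above, exactly as in the previous proof. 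Commutativity of $L$ keeps $C$ symmetric, each band being a permutation of $\{1,\dots,2k+1\}$ gives total-coloring conditions (b)--(c), and idempotency makes the diagonal the coloring $v_p\mapsto p\bmod(2m+1)$ up to relabeling, which is proper on all of $C_n^k$ since two vertices of one color sit at circular distance a positive multiple of $2m+1>k$.

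Equivalently, one may view this as an edge-disjoint splitting $C_n^k=C_n(A)\cup C_n(B)$ with $|A|=m$ and $|B|=k-m$, where the $2m$-regular part $C_n(A)$ is totally colored with the palette $\{1,\dots,2m+1\}$ (it is Type~I because $n\equiv 0\pmod{2m+1}$, by Campos--de Mello \cite{CADM1}) using the vertex coloring $v_p\mapsto p\bmod(2m+1)$, and the $2(k-m)$-regular part $C_n(B)$ has its edges colored with the fresh palette $\{2m+2,\dots,2k+1\}$. This is where $s$ even is essential: if some generator $d$ has $n/\gcd(d,n)$ odd then $C_n(d)$ is a disjoint union of odd cycles, hence Class~II, so no such $d$ may lie in $B$ --- for instance the naive choice $A=\{1,\dots,m\}$, $B=\{m+1,\dots,k\}$ breaks already for $C_6^2$, where $B=\{2\}$ and $C_6(2)$ is two triangles. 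Because $n$ is even there are at most $m$ such ``bad'' generators in $\{1,\dots,k\}$, so they can all be placed in $A$; then every generator in $B$ contributes only even cycles, and $C_n(B)$ gets a proper $2(k-m)$-edge-coloring using two perfect matchings per generator.

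The main obstacle is the consistency of the construction. In the color-matrix version one must check that the downward propagation, run over all $n$ rows across the $s$ blocks of length $2m+1$, closes up so that the bands of the final rows reproduce those of rows $1,\dots,k+1$ with no collision on the seam where a band straddles the last block and the first --- and it is exactly here that $s$ being even makes the accumulated anti-circulant shifts return to the identity pattern. In the decomposition version the obstacle is instead to show that $C_n(A)$, with $A$ no longer an initial segment of the generators, remains Type~I with the prescribed vertex coloring; one would either extend the latin-square argument to this circulant or exhibit an automorphism carrying $A$ to a block-structured generating set to which the previous theorem's construction applies directly.
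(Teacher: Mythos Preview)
Your proposal is a plan with two acknowledged gaps, not a proof: in approach~1 you leave the wrap-around consistency of the $(2k+1)$-square propagation unverified, and in approach~2 you leave open whether $C_n(A)$ is Type~I once $A$ is no longer the initial segment $\{1,\dots,m\}$. Neither gap is closed, and in fact approach~1 is aimed at the wrong square: the paper uses a commutative idempotent latin square $C'$ of order $2m+1=k+i$ --- the \emph{block size}, not the band width $2k+1$ --- placing (the relevant portion of) $C'$ on each of the $s$ diagonal $(2m+1)\times(2m+1)$ blocks of the color matrix and filling the off-diagonal blocks with two explicit triangular tableaux $A,B$ in the fresh colors $2m+2,\dots,2k+1$ (absorbing, when $i>1$, the overflow entries cut from $C'$). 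The super-diagonal of blocks then alternates $B,A^{T},B,A^{T},\dots$, and the parity of $s$ is exactly what makes this alternation close with $A,A^{T}$ in the wrap-around corners; propagating a $(2k+1)$-square as you suggest cannot work directly, since $2k+1\nmid n$ in general.

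Your second approach is closer in spirit, and the vertex coloring $v_p\mapsto p\bmod(2m+1)$ you propose is indeed what the paper uses (it is the diagonal of the idempotent $C'$). The missing idea is that the paper's edge partition is \emph{positional}, not by generator: an edge $v_pv_{p+d}$ takes a color from $C'$ if $p$ and $p+d$ lie in the same $(2m+1)$-block, and from the tableaux otherwise, so a single generator $d$ contributes edges of both kinds. The Class~II hazard you correctly diagnose for $C_n(d)$ with $n/\gcd(d,n)$ odd therefore never arises --- the between-block edges are colored by the explicit tableau pattern, not via a $1$-factorization of any $C_n(d)$. That positional split is precisely the step your outline lacks.
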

\begin{proof}
We observe that  $n\equiv0 \bmod (k+i)$ with $1\le i\le k+1$. We also know that there exists a commutative idempotent latin square of odd order, $k+i$ in this case, which we call $C'$. Now, we consider two tableau of the form\\
Tableau $B'$:\\
\\
\begin{tabular}{|c|c|c|c|c|}\hline
$2m+2$&&&&\\\hline$2m+3$&$2m+2$&&&\\\hline$2m+4$&$2m+3$&$2m+2$&&\\\hline\ldots&\ldots&\ldots&$2m+2$&\\\hline$2k+1$&\ldots&\ldots&\ldots&$2m+2$\\\hline
\end{tabular}\\
\\

Tableau $A'$:\\
\\
\begin{tabular}{|c|c|c|c|c|}\hline
$2k+1$&$2k$&$2k-1$&$\ldots$&$2m+2$\\\hline&$2k+1$&$2k$&$\ldots$&$2m+3$\\\hline&&$2k+1$&$2k$&$\ldots$\\\hline$\ldots$&$\ldots$&$\ldots$&$2k+1$&$2k$\\\hline&$\ldots$&$\ldots$&$\ldots$&$2k+1$\\\hline
\end{tabular}\\




 Now, arranging the two tableau and the idempotent and commutative latin square of order $k+1$ in the below fashion, would give us the color matrix desired, with $2k+1$ colors. The portion $C$ is the portion of the latin square $C'$ which fits in the color matrix. 

\begin{tabular}{|c|c|c|c|c|c|}\hline
$C$&\diagbox{$B$}{}&&&&\diagbox{}{$A$}\\\hline
\diagbox{}{$B^T$\\}&C&\diagbox{$A^T$\\}{}&&&\\\hline
&\diagbox{}{$A$}&C&\diagbox{$B$}{}&&\\\hline
&&\diagbox{}{$B^T$\\}&C&\diagbox{$A^T$\\}{}&\\\hline
&&&\diagbox{}{$A$}&C&\diagbox{$B$}{}\\\hline
\diagbox{$A^T$\\}{}&&&&\diagbox{}{$B^T$\\}&$C$\\\hline\end{tabular}\\
\\
In case $i=1$, the entries of $C'$ are written wholly, so that the tableau $A'$ and $B'$ are equal to the tableau $A$ and $B$. In case $i>1$, the tableau $A'$ and $B'$ could be modified to accommodate the missed numbers in the color matrix,  which are deleted from the commutative idempotent latin square $C'$ That is, the portion of the $k+i$ latin square starting from the $(k+2)^{nd}$ position in the first row, $(k+3)^{rd}$  position in the second row and so on, is cut and juxtaposed on the tableau $A'$ and $B'$ to give us the tableau $A$ and $B$. In particular, if $i=k+1$, the tableau $A'$ and $B'$ are wholly replaced with the portions deleted from the latin square $C'$. The portion deleted from the latin square, in case $i>1$ would be $D'$ and its transpose, where $D$ given by:\\
\\
\begin{tabular}{|c|c|c|c|c|}\hline$e_{1,k+2}$&$e_{1,k+3}$&$\ldots$&$\ldots$&$e_{1,k+i}$\\\hline&$e_{2,k+3}$&$e_{2,k+4}$&$\ldots$&$e_{2,k+i}$\\\hline&&$\ldots$&$\ldots$&$e_{3,k+i}$\\\hline&&&$\ldots$&$e_{4,k+i}$\\\hline&&&&$e_{k+i,k+i}$\\\hline\end{tabular},\\
\\

where $e_{ij}$ denote the entries of the latin square $C'$.

Say, for example, for the color matrix of the graph $C_{20}^4$, since $20=2(4+1)=2(2m+1)$ for $m=2$, we take $C'$ to be\\
\begin{tabular}{|c|c|c|c|c|}\hline1&4&2&5&3\\\hline4&2&5&3&1\\\hline2&5&3&1&4\\\hline5&3&1&4&2\\\hline3&1&4&2&5\\\hline\end{tabular}\\
\\
$A$ to be \\
\\
\begin{tabular}{|c|c|c|c|}\hline9&8&7&6\\\hline&9&8&7\\\hline&&9&8\\\hline&&&9\\\hline\end{tabular}\\
\\
and $B$ to be\\
\\
\begin{tabular}{|c|c|c|c|}\hline6&&&\\\hline7&6&&\\\hline8&7&6&\\\hline9&8&7&6\\\hline\end{tabular}\\
\\
Therefore, the color matrix desired is:\\
\\
\begin{tabular}{|c|c|c|c|c|c|c|c|c|c|c|c|c|c|c|c|c|c|c|c|}\hline1&4&2&5&3&&&&&&&&&&&&9&8&7&6\\\hline4&2&5&3&1&6&&&&&&&&&&&&9&8&7\\\hline2&5&3&1&4&7&6&&&&&&&&&&&&9&8\\\hline5&3&1&4&2&8&7&6&&&&&&&&&&&&9\\\hline3&1&4&2&5&9&8&7&6&&&&&&&&&&&\\\hline&6&7&8&9&1&4&2&5&3&&&&&&&&&&\\\hline&&6&7&8&4&2&5&3&1&9&&&&&&&&&\\\hline&&&6&7&2&5&3&1&4&8&9&&&&&&&&\\\hline&&&&6&5&3&1&4&2&7&8&9&&&&&&&\\\hline&&&&&3&1&4&2&5&6&7&8&9&&&&&&\\\hline&&&&&&9&8&7&6&1&4&2&5&3&&&&&\\\hline&&&&&&&9&8&7&4&2&5&3&1&6&&&&\\\hline&&&&&&&&9&8&2&5&3&1&4&7&6&&&\\\hline&&&&&&&&&9&5&3&1&4&2&8&7&6&&\\\hline&&&&&&&&&&3&1&4&2&5&9&8&7&6&\\\hline&&&&&&&&&&&6&7&8&9&1&4&2&5&3\\\hline9&&&&&&&&&&&&6&7&8&4&2&5&3&1\\\hline8&9&&&&&&&&&&&&6&7&2&5&3&1&4\\\hline7&8&9&&&&&&&&&&&&6&5&3&1&4&2\\\hline6&7&8&9&&&&&&&&&&&&3&1&4&2&5\\\hline\end{tabular}\\
\\

For the case of the color matrix of the graph $C_{18}^5$, since $18=2(8+1)=2(2m+1)$ for $m=4$ we have $C'$ to be \\
\\
\begin{tabular}{|c|c|c|c|c|c|c|c|c|}\hline1&6&2&7&3&8&4&9&5\\\hline6&2&7&3&8&4&9&5&1\\\hline2&7&3&8&4&9&5&1&6\\\hline7&3&8&4&9&5&1&6&2\\\hline3&8&4&9&5&1&6&2&7\\\hline8&4&9&5&1&6&2&7&3\\\hline4&9&5&1&6&2&7&3&8\\\hline9&5&1&6&2&7&3&8&4\\\hline5&1&6&2&7&3&8&4&9\\\hline\end{tabular}\\
\\
$A$ to be\\
\\
\begin{tabular}{|c|c|c|c|c|}\hline11&10&4&9&5\\\hline&11&10&5&1\\\hline&&11&10&6\\\hline&&&11&10\\\hline&&&&11\\\hline\end{tabular}\\
\\
and $B$ to be\\
\\
\begin{tabular}{|c|c|c|c|c|}\hline10&&&&\\\hline11&10&&&\\\hline4&11&10&&\\\hline9&5&11&10&\\\hline5&1&6&11&10\\\hline\end{tabular}\\
\\
Therefore, the color matrix would be\\
\\
\begin{tabular}{|c|c|c|c|c|c|c|c|c|c|c|c|c|c|c|c|c|c|}\hline1&6&2&7&3&8&&&&&&&&11&10&4&9&5\\\hline6&2&7&3&8&4&9&&&&&&&&11&10&5&1\\\hline2&7&3&8&4&9&5&1&&&&&&&&11&10&6\\\hline7&3&8&4&9&5&1&6&2&&&&&&&&11&10\\\hline3&8&4&9&5&1&6&2&7&10&&&&&&&&11\\\hline8&4&9&5&1&6&2&7&3&11&10&&&&&&&\\\hline&9&5&1&6&2&7&3&8&4&11&10&&&&&&\\\hline&&1&6&2&7&3&8&4&9&5&11&10&&&&&\\\hline&&&2&7&3&8&4&9&5&1&6&11&10&&&&\\\hline&&&&10&11&4&9&5&1&6&2&7&3&8&&&\\\hline&&&&&10&11&5&1&6&2&7&3&8&4&9&&\\\hline&&&&&&10&11&6&2&7&3&8&4&9&5&1&\\\hline&&&&&&&10&11&7&3&8&4&9&5&1&6&2\\\hline11&&&&&&&&10&3&8&4&9&5&1&6&2&7\\\hline10&11&&&&&&&&8&4&9&5&1&6&2&7&3\\\hline4&10&11&&&&&&&&9&5&1&6&2&7&3&8\\\hline9&5&10&11&&&&&&&&1&6&2&7&3&8&4\\\hline5&1&6&10&11&&&&&&&&2&7&3&8&4&9\\\hline\end{tabular}\\
\\
 \end{proof}

\begin{thm}
Let $G$ be a power of cycle graph $C_n^k$ with $n=s(2m+1)\pm1$, with $s$ being even and $m$ being positive integers $\frac{k}{2}<m<k$ and $k=2m+1-i$. Then the graph $C_n^k$ is total colorable with $2k+2$ colors. In fact, $\chi''(C_n^k)\leq 2k+3$.
\end{thm}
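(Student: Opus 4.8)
The strategy mirrors the proof of the previous theorem: we build an explicit $(2k+3)$-total colouring of $C_n^k$ from a commutative idempotent Latin square together with a small number of auxiliary triangular tableaux, and then argue that for the specific value $n=s(2m+1)\pm1$ the ``overflow'' caused by the $\pm1$ perturbation of the block length costs at most one or two extra colours beyond $2k+1$.  First I would record the structural observations: $\Delta(C_n^k)=2k$, so the target is $\Delta+3$ in general and $\Delta+2$ in the first (weaker) claim; and $n=s(2m+1)\pm1$ is \emph{almost} a multiple of $2m+1=k+i$, so the colour matrix can be tiled by $s$ diagonal blocks each of size $2m+1$ (the Latin square $C'$ of odd order $k+i$) with the last block being one row/column short or long.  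As in Theorem~2.2, the off-diagonal bands of $1$'s in the adjacency matrix of $C_n^k$ are filled using the triangular tableaux $A$, $B$ (and their transposes), which carry the ``high'' colours $2m+2,\dots,2k+1$ arranged so that along every row and column no colour repeats and no edge colour coincides with the two incident vertex (diagonal) colours.

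**Key steps.**  (1) Set up the block decomposition of the $n\times n$ colour matrix: place copies of $C$ (the truncation of $C'$ that fits) along the diagonal, and copies of $B,B^{T},A,A^{T}$ in the two adjacent super/sub-diagonal bands, exactly as in the picture in Theorem~2.2's proof, but now with $s$ diagonal blocks instead of a fixed small number.  (2) Handle the wrap-around: because the circulant structure makes row $n-1$ adjacent to rows $0,\dots,k-1$, the bottom-left corner of the matrix must also be filled; show that the tableaux $A^{T}$, $B^{T}$ slot in there consistently, using the commutativity (symmetry) of $C'$ to fix the first entry of each new row and the cyclic shift to propagate the rest.  (3) Account for the $\pm1$: when $n=s(2m+1)+1$ there is one extra index to colour, so one extra colour ($2k+2$) is introduced and assigned to the surplus vertex together with a repair of the few edges around it; when $n=s(2m+1)-1$ one block is truncated and a local clash in the seam between the last block and the first block forces at most one further colour, giving $2k+3$ overall.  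The condition $\frac{k}{2}<m<k$ with $k=2m+1-i$ (so $1\le i\le k+1$) is exactly what guarantees that $A$ and $B$ have enough rows to absorb the deleted portion $D'$ of $C'$ in the manner described after Theorem~2.2.  (4) Finally, verify the three total-colouring conditions (a)--(c) block by block — within a diagonal block it is the Latin-square property of $C'$; within an $A$- or $B$-band it is the strictly-decreasing diagonal pattern; at the seams it is the explicit small check illustrated by the $C_{20}^4$ and $C_{18}^5$ examples.

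**Main obstacle.**  The routine part is the interior of the colour matrix; the genuine difficulty is the \emph{seam analysis} at the place where the short-or-long last block meets the first block under the circulant wrap-around.  This is where the $\pm1$ discrepancy concentrates, and it is the only place where the clean $2k+1$ bound breaks down — one has to show that a bounded local recolouring (never touching more than $O(k)$ entries near the corner) restores a proper total colouring while spending at most two new colours.  I would isolate this as a lemma: ``any colour matrix built as above, with one block of size $2m+1\pm1$, can be completed to a proper total colouring of $C_n^k$ using the colour set $\{1,\dots,2k+3\}$, and using only $\{1,\dots,2k+2\}$ in the $+1$ case.''  Proving that lemma — essentially a finite case check parameterised by $i$ — is the crux; everything else is bookkeeping inherited from Theorems~2.1 and~2.2.
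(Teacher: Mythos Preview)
Your plan rebuilds the tiling on the $n\times n$ matrix with one ``short or long'' block and then proposes a seam-analysis lemma at the wrap-around.  The paper sidesteps all of that: it does \emph{not} re-tile $C_n^k$ at all.  Instead it takes the finished $(2k+1)$-colour matrix of $C_{s(2m+1)}^k$ supplied by the previous theorem and performs a one-row/one-column surgery.  In the $-1$ case it simply deletes the last row and column; the only damage is that the entries that used to sit on the $(n-k+1)$-st sub/superdiagonal are now missing, and a single fresh colour $2k+2$ fills that diagonal.  In the $+1$ case it appends one row and column, moves the $k$ entries from the $k$-th and $(n-k+1)$-st subdiagonals into the new row/column, and again uses one fresh colour for the vacated positions and the new vertex.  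So both $\pm1$ cases yield a $(2k+2)$-total colouring, not only the $+1$ case as you assert; the bound $2k+3$ in the statement is a separate remark, obtained by applying the same deletion trick to \emph{any} odd $n$ starting from the known $\Delta+2$ colouring of the neighbouring even order.

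The practical consequence is that your ``main obstacle'' --- the seam lemma parameterised by $i$ --- never arises.  Because the surgery touches only one row, one column, and one subdiagonal, there is no interaction between blocks to analyse; the Latin-square and tableau structure from Theorem~2.2 is inherited wholesale and you only have to check that the new colour placed on a single subdiagonal meets no other entry in its row or column, which is immediate.  Your approach could in principle be pushed through, but it is strictly harder, gives a weaker bound in the $-1$ case, and misses the one-line reduction that makes the paper's argument work.
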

\begin{proof}
Case 1: $n=s(2m+1)-1$\\
From the previous theorem, it is clear that $C_{n+1}^k$ is $2k+1$ colorable. It remains to show that this coloring could be extended to a $2k+2$ coloring for $C_n^k$. The extension is made possible by deleting  last row and last coloumn of the color matrix of $C_{n+1}^k$ and appropriately adding a new color in the lower and upper $(n-k+1)^{th}$ subdiagonal of the color matrix so as to obtain the desired color matrix with $2k+2$ colors.\\
This method of extension applies to any powers of cycle graph of any odd order. Since it is already proved that the even order powers of cycles satisfy a type II coloring, therefore, by extension, since we require only one extra color, the total chromatic number of the graph is $2k+3$.\\

For example, the color matrix below for $C_{14}^3$\\
\\
\begin{tabular}{|c|c|c|c|c|c|c|c|c|c|c|c|c|c|}
\hline1&5&2&6&&&&&&&&3&7&4\\\hline5&2&6&3&7&&&&&&&&4&1\\\hline2&6&3&7&4&1&&&&&&&&5\\\hline6&3&7&4&1&5&2&&&&&&&\\\hline&7&4&1&5&2&6&3&&&&&&\\\hline&&1&5&2&6&3&7&4&&&&&\\\hline&&&2&6&3&7&4&1&5&&&&\\\hline&&&&3&7&4&1&5&2&6&&&\\\hline&&&&&4&1&5&2&6&3&7&&\\\hline&&&&&&5&2&6&3&7&4&1&\\\hline&&&&&&&6&3&7&4&1&5&2\\\hline3&&&&&&&&7&4&1&5&2&6\\\hline7&4&&&&&&&&1&5&2&6&3\\\hline4&1&5&&&&&&&&2&6&3&7\\\hline
\end{tabular}\\
\\

is modified to that of $C_{13}^3$ by deletion of one row and column and addition of the appropriate color to:
\\
\begin{tabular}{|c|c|c|c|c|c|c|c|c|c|c|c|c|}
\hline1&5&2&6&&&&&&&\textcircled{8}&3&7\\\hline5&2&6&3&7&&&&&&&\textcircled{8}&4\\\hline2&6&3&7&4&1&&&&&&&\textcircled{8}\\\hline6&3&7&4&1&5&2&&&&&&\\\hline&7&4&1&5&2&6&3&&&&&\\\hline&&1&5&2&6&3&7&4&&&&\\\hline&&&2&6&3&7&4&1&5&&&\\\hline&&&&3&7&4&1&5&2&6&&\\\hline&&&&&4&1&5&2&6&3&7&\\\hline&&&&&&5&2&6&3&7&4&1\\\hline\textcircled{8}&&&&&&&6&3&7&4&1&5\\\hline3&\textcircled{8}&&&&&&&7&4&1&5&2\\\hline7&4&\textcircled{8}&&&&&&&1&5&2&6\\\hline
\end{tabular}\\
\\

The encircled number is the added color to the  color matrix.\\

\noindent Case 2:$n=s(2m+1)+1$.\\
Here, we add a  row and a column at the last and then add entries in the last row and last column which are the $k$ deleted entries each from the lower and upper $k^{th}$ and $(n-k+1)^{th}$ subdiagonal of the original color matrix of $C_{n-1}^k$. In place of the entries deleted in the $k^{th}$ subdiagonal, we add a new color. The new vertex is also given the new color.  For example, the color matrix can be modified by adding one row and column to give the color matrix of $C_{15}^3$ as follows:
\\
\begin{tabular}{|c|c|c|c|c|c|c|c|c|c|c|c|c|c|c|}
\hline1&5&2&\textcircled{8}&&&&&&&&\textcircled{}&7&4&\textcircled{6}\\\hline5&2&6&3&\textcircled{8}&&&&&&&&\textcircled{}&1&\textcircled{7}\\\hline2&6&3&7&4&\textcircled{8}&&&&&&&&\textcircled{}&\textcircled{1}\\\hline\textcircled{8}&3&7&4&1&5&2&&&&&&&&\\\hline&\textcircled{8}&4&1&5&2&6&3&&&&&&&\\\hline&&\textcircled{8}&5&2&6&3&7&4&&&&&&\\\hline&&&2&6&3&7&4&1&5&&&&&\\\hline&&&&3&7&4&1&5&2&6&&&&\\\hline&&&&&4&1&5&2&6&3&7&&&\\\hline&&&&&&5&2&6&3&7&4&1&&\\\hline&&&&&&&6&3&7&4&1&5&2&\\\hline\textcircled{}&&&&&&&&7&4&1&5&2&6&\textcircled{3}\\\hline7&\textcircled{}&&&&&&&&1&5&2&6&3&\textcircled{4}\\\hline4&1&\textcircled{}&&&&&&&&2&6&3&7&\textcircled{5}\\\hline\textcircled{6}&\textcircled{7}&\textcircled{1}&&&&&&&&&\textcircled{3}&\textcircled{4}&\textcircled{5}&\textcircled{8}\\\hline
\end{tabular}\\
 \\
The encircled numbers show the changes from the color matrix of $C_{14}^3$
\end{proof}

\begin{thm}
Let $G$ be the cayley graph of a nilpotent group  of  even order $n$ with maximum degree $\Delta (G)\ge\frac{n}{2}$ and the generating set $S$  not containing an element of order two. If $G$ is total colorable with $\chi''(G)$ colors and if $G'$ is the cayley graph of nilpotent group of even order graph with maximum degree $\Delta(G')$, $\Delta(G) \le \Delta(G') \le n-2$ formed by with generating set $S'=S\cup S''$ such that $S''$ which generates the whole group and also does not have an order two element,  then graph $G'$ is total colorable with $\chi''(G)+(\Delta(G')-\Delta(G))$ colors. In particular, if $G$ is type I (type II), then $G'$ is also type I (type II).
\end{thm}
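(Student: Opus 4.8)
The plan is to start from a total colouring of $G$ with $\chi''(G)$ colours and extend it over the edges of $G'$ not already present in $G$, using a fresh block of colours. Write $S'=S\sqcup S''$ and put $H=Cay(\Gamma,S'')$, so $E(G')=E(G)\sqcup E(H)$. Since $S''$ has no element of order two it is a disjoint union of pairs $\{s,s^{-1}\}$ with $s\neq s^{-1}$, so $|S''|=\Delta(G')-\Delta(G)$ and $H$ is $(\Delta(G')-\Delta(G))$-regular. Because $S''$ generates $\Gamma$, $H$ is connected, and as $\Gamma$ is nilpotent of even order with $S''$ free of involutions, $H$ is $1$-factorisable by the result of Stong \cite{STO} already used in the proof of Theorem 2.1; fix such a $1$-factorisation $M_1,\dots,M_{\Delta(G')-\Delta(G)}$.

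Now fix a total colouring $\varphi$ of $G$ with colour set $\{1,\dots,\chi''(G)\}$ and introduce $\Delta(G')-\Delta(G)$ brand-new colours. Colour $V(G')$ and $E(G)$ as in $\varphi$ and give every edge of $M_j$ the $j$-th new colour. Every total-colouring condition that involves a new colour holds automatically: an edge of $M_j$ meets only $\varphi$-coloured objects, and at any vertex distinct new edges lie in distinct $1$-factors; the conditions internal to $G$ hold because $\varphi$ is a total colouring. The only condition that can fail is properness of the vertex colouring on an edge $uv\in E(H)$ with $\varphi(u)=\varphi(v)$. Granting that this can be avoided, we are done, and the final assertion follows because $\chi''(G)+(\Delta(G')-\Delta(G))$ equals $\Delta(G')+1$ when $\chi''(G)=\Delta(G)+1$ and $\Delta(G')+2$ when $\chi''(G)=\Delta(G)+2$, while always $\chi''(G')\ge\Delta(G')+1$.

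It remains to handle the vertex colouring, and here the hypotheses enter. Since $n$ is even, $\Gamma$ has an involution $t$, and $t\notin S'=S\cup S''$ as neither $S$ nor $S''$ contains an order-two element; hence the $n/2$ left cosets of $\langle t\rangle$ partition $V(G')$ into independent sets, so $\chi(G')\le n/2\le\Delta(G)\le\chi''(G)-1$ and $G'$ does admit proper vertex colourings using only colours from $\{1,\dots,\chi''(G)\}$. What is genuinely needed — and what I expect to be the hard part — is that among the total colourings of $G$ with $\chi''(G)$ colours there is one whose underlying vertex colouring is proper for the larger graph $G'$. I would try a minimality argument: take such a colouring minimising the number of edges of $H$ with equally coloured endpoints and, if this number is positive, recolour along a suitable two-colour Kempe chain of $G$ through one endpoint of such an edge; the point is that $\Delta(G)\ge n/2$ leaves each vertex with at most $n/2-1$ non-neighbours in $G$ (hence few $H$-edges that a recolouring can spoil), and this together with the slack $\chi''(G)-\Delta(G)\ge1$ at every vertex and the vertex-transitivity of $G$ should let one choose the Kempe chain so that the count strictly decreases, a contradiction. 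The delicate point is controlling how the Kempe swap interacts with properness at all the vertices it moves and with the monochromatic-edge count on $H$; an alternative, closer to the earlier sections of the paper, is to build the total colouring of $G$ from a Cayley-table (Latin-square) scheme whose diagonal already realises the pairing $\{v,tv\}$, so that the extension above applies verbatim.
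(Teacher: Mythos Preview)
Your overall strategy---take a total colouring of $G$, $1$-factorise $H=Cay(\Gamma,S'')$ via Stong, and colour the new edges with fresh colours---is exactly the paper's approach. The paper's proof is only a few lines: it invokes Cauchy to get an involution $t$, notes that the cosets $\{g,gt\}$ give an $n/2$-colouring of $V(G')$, and then asserts that ``we need to take care only of the edge colouring of the graph $G'-G$ in order to give a total colouring of $G'$ from the existing total colouring of $G$'', after which Stong's $1$-factorisation supplies the $\Delta(G')-\Delta(G)$ extra colours.

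The gap you flagged is real, and the paper does not fill it. The paper's ``therefore'' simply presumes that the given total colouring of $G$ already restricts on vertices to a colouring that remains proper for $G'$; it offers no argument (no Kempe chains, no reconstruction) for why such a total colouring of $G$ with $\chi''(G)$ colours must exist. The observation that $\chi(G')\le n/2\le\chi''(G)$ shows only that \emph{some} proper vertex colouring of $G'$ fits inside the palette, not that it can be completed to a total colouring of $G$ using no more than $\chi''(G)$ colours. So your write-up is in fact more careful than the paper's: you have isolated precisely the step the paper takes for granted, and your suggested fixes (a Kempe/minimality argument, or building the total colouring of $G$ from the outset so that its vertex colouring is the coset pairing $\{v,tv\}$) are the natural ways one would try to close it. The second route is closer in spirit to what the paper seems to have in mind, given the explicit Latin-square constructions in the surrounding theorems, but neither is carried out in the published proof.
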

\begin{proof}
Let $s$ be an element of order two in the graph (which is guaranteed by Cauchy's theorem). Since the vertices of the graph $g_i,\ \ i\in\{1,2,\ldots,n\}$ can always be arranged in $\frac{n}{2}$ independent color classes as $[g_1, g_1s],[g_2, g_2s],\ldots,[g_{\frac{n}{2}}, g_{\frac{n}{2}}s]$,  this gives us a $\frac{n}{2}$ coloring of the vertices. Therefore, we need to take care only of the edge coloring of the graph $G'-G$ in order to give a total coloring of $G'$ from the existing total coloring of $G$. Now, since $G'-G$ is $1-$ factorizable \cite{STO}, therefore, we only need $\Delta'-\Delta$ extra colors, thereby giving a total coloring of $\chi''+(\Delta'-\Delta)$. Thus, if $G$ were a type I( type II) graph, then $G'$ also would be type I( type II).
\end{proof}

For a positive integer $n>1$ the \textit{unitary Cayley graph} $X_n=Cay(Z_n, U_n)$ is defined by the additive group of
    the ring $Z_n$ of integer modulo $n$ and the multiplicative group $U_n$ of its units. If we represent the elements of
    $Z_n$ by the integers 0,1,...$n-1$, then it is well known that \begin{center} $U_n=\{a\in Z_n: gcd(a,n)=1\}$.
    \end{center} So $X_n$ has vertex set $V(X_n)=X_n=\{0,1,2,...,n-1\}$ and edge set \begin{center} $E(X_n)=\{(a,b): a,b \in
    Z_n, gcd(a-b,n)=1\}$. \end{center} Boggess et al. ~\cite{BHJR} studied the structure of unitary Cayley graphs. They
    have also discussed chromatic number, vertex and edge connectivity, planarity and crossing number. Klotz and Sander
    ~\cite{ws07} have determined the clique number, the independence number and the diameter. They have given a necessary
    and sufficient condition for the perfectness  of $X_n$.

    The graph $X_n$ is regular of degree $U_n=\varphi(n)$ denotes the Euler function. Let the prime factorization of $n$ be
    $p_1^{\alpha_1} p_2^{\alpha_2}...p_t^{\alpha_t}$ where $p_1<p_2<...<p_t$.  If $n=p$ is a prime number, then $X_n=K_p$ is
    the complete graph on $p$ vertices. If $n=p^\alpha$ is a prime power then $X_n$ is a complete $p$-partite graph. In the
    following theorem we prove TCC holds for unitary Cayley graphs.

    \begin{thm} A unitary Cayley graph $X_n$ is $(\Delta(X_n)+2)$ - total
    colorable.  \end{thm}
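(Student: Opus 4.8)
The plan is to argue by cases on the prime factorization $n=p_1^{\alpha_1}\cdots p_t^{\alpha_t}$, using three structural facts: if $2\mid n$ then $X_n$ is bipartite, since every element of $U_n$ is then odd; if $n=p^{\alpha}$ then $X_n$ is the complete $p$-partite graph with every part of size $p^{\alpha-1}$, because two residues are non-adjacent exactly when they are congruent modulo $p$; and $X_{mn}\cong X_m\times X_n$ (tensor product) whenever $\gcd(m,n)=1$, since the Chinese Remainder isomorphism $\mathbb Z_{mn}\cong\mathbb Z_m\times\mathbb Z_n$ carries $U_{mn}$ onto $U_m\times U_n$.

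First, the easy cases. If $n$ is even, $X_n$ is bipartite; any bipartite graph admits a $(\Delta+2)$-total coloring, obtained by properly edge-coloring with $\Delta$ colors (K\"onig's theorem) and coloring the two parts with two fresh colors $\Delta+1$ and $\Delta+2$, all three total-coloring conditions being immediate. If $n=p^{\alpha}$, then $X_n$ is a balanced complete multipartite graph, and the TCC holds for complete multipartite graphs, as recalled in the Introduction. In both cases $\chi''(X_n)\le\Delta(X_n)+2$.

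The substantive case is $n$ odd with at least two distinct prime divisors. Let $p$ be the least prime dividing $n$, so $p\ge 3$. The $p$ residue classes $C_0,\dots,C_{p-1}$ modulo $p$ are independent sets of common size $n/p$ partitioning $V(X_n)$, so $\chi(X_n)=p$; I would color each $C_i$ with the vertex color $i+1$. Using the translations $x\mapsto x+1$ and the unit-multiplications $x\mapsto ux$ (both of which are automorphisms that permute the classes $C_i$), one checks that for $i\ne j$ the bipartite subgraph $G_{ij}$ of $X_n$ between $C_i$ and $C_j$ is regular, and in fact all the $G_{ij}$ share the common degree $d=\Delta(X_n)/(p-1)$; being regular and bipartite, each $G_{ij}$ is $1$-factorable. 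It then remains to produce a proper edge-coloring of $X_n$ from $\{1,\dots,\Delta(X_n)+2\}$ in which no edge of $G_{ij}$ uses color $i+1$ or color $j+1$ --- equivalently, a proper list edge-coloring in which every edge of $G_{ij}$ carries the $\Delta(X_n)$-element list $\{1,\dots,\Delta(X_n)+2\}\setminus\{i+1,j+1\}$.

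The hard part is exactly this last step: since $n$ is odd, $X_n$ is a regular graph of odd order and hence of Class~$2$, so $\chi'(X_n)=\Delta(X_n)+1$, and no color-blind edge-coloring argument can work with lists of size only $\Delta(X_n)$; one must exploit that the lists are constant on each block $G_{ij}$. I would do this through the product structure, writing $X_n=X_{p^{\alpha}}\times X_{n/p^{\alpha}}$ and inducting on the number of prime divisors: a $(\Delta+2)$-total coloring of the smaller factor $X_{n/p^{\alpha}}$ is lifted along the blow-up by the complete $p$-partite graph $X_{p^{\alpha}}$, each edge of $X_{n/p^{\alpha}}$ exploding into a $1$-factorable bipartite graph colored from a block of colors while the vertices of $X_n$ are colored by the part-index in $X_{p^{\alpha}}$; the accounting then has to be arranged --- using $\chi'(X_{n/p^{\alpha}})\le\Delta(X_{n/p^{\alpha}})+1$ and the blow-up multiplicity $\varphi(p^{\alpha})$ --- so that only $\Delta(X_n)+2$ colors are consumed, and for the residual square-free possibility a direct latin-square construction in the spirit of the powers-of-cycles arguments earlier in the paper (a commutative idempotent latin square encoding the $1$-factorizations of the blocks together with triangular tableaux carrying the surplus colors) would be used. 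In every route the one delicate point is keeping the edge-coloring proper across the boundaries between the blocks $G_{ij}$ while honoring the two forbidden vertex colors of each block; granting that, combining with the vertex coloring $C_i\mapsto i+1$ yields the desired $(\Delta(X_n)+2)$-total coloring.
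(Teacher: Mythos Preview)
Your handling of the even case and the prime-power case is fine and matches the paper. The gap is in the main case, odd $n$ with at least two prime factors: you correctly identify that a blind $\Delta$-edge-coloring cannot work because $X_n$ is Class~2, but you then set yourself a list-edge-coloring problem (each edge of $G_{ij}$ must avoid the two vertex colors $i+1,j+1$) that you do not actually solve. The tensor-product induction you sketch is left at the level of ``the accounting then has to be arranged'' and ``granting that''; there is no argument showing that the lifted colorings stay proper across the different blocks $G_{ij}$, and this is exactly the point where the difficulty lives. As written, the proposal is incomplete.

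The paper avoids this difficulty altogether by a different decomposition. Instead of partitioning the vertices into the $p$ independent classes $C_i$ and then fighting with the edges, it partitions the vertices into $n/p_1$ \emph{cliques}: the blocks of $p_1$ consecutive residues $\{kp_1,\,kp_1+1,\,\dots,\,(k+1)p_1-1\}$ are cliques in $X_n$ because any difference less than $p_1$ is coprime to $n$. Since $p_1$ is odd, each $K_{p_1}$ is type~I and admits a total coloring with $p_1$ colors; using the same $p_1$-coloring on every block gives a valid partial total coloring of $X_n$ (two vertices receiving the same color lie in the same residue class modulo $p_1$, hence are non-adjacent). After deleting the clique edges the remaining graph is $(\varphi(n)-p_1+1)$-regular, and a single application of Vizing's theorem edge-colors it with $\varphi(n)-p_1+2$ \emph{fresh} colors, for a total of $p_1+(\varphi(n)-p_1+2)=\varphi(n)+2=\Delta(X_n)+2$. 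No list-coloring, no induction on prime factors, no tensor-product bookkeeping is needed; the whole hard step you isolate simply disappears once one decomposes into cliques rather than independent sets.
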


    \begin{proof} We know that a unitary Cayley graph can be obtained from
        a balanced $r$ partite     graph by deleting some edges. Suppose
        $n=p$ is a prime number, then $X_n$ is the complete graph on $p$
        vertices.  Also, if $n=p^\alpha$, a prime power, then $X_n$ is a
        complete $p$-partite graph and TCC holds for these two graphs
        ~\cite{YAP}.

        When $n=2k, k \in N$, then the unitary Cayley graph is a bipartite
        graph and any bipartite graph is total colorable.

        Suppose $n \not \equiv 0 \mod 2 $. As $p_1$ is the smallest prime,
        $kp_1,kp_1+1,...,(k+1)p_1-1,$ where $k=0,1,2,...,\frac{n}{p_1}-1$
        induces $\frac{n}{p_1}$  vertex disjoint cliques each of order $p_1$.
        Since $p_1$ is odd, we can color all the elements of these
        $\frac{n}{p_1}$ cliques using $p_1$ colors ~\cite{79}. Now remove the
        edges of these cliques. The remaining graph is a
        $\varphi(n)-p_1+1$-regular graph where the vertices are already
        coloured. We  color the  edges of this resultant graph with
        $\varphi(n)-p_1+2$ colors. Thus we have used $\varphi(n)+2$ colors
    for the total coloring of $X_n$.  
    \end{proof}
    
\section{Mock Threshold Graphs and Odd Graphs}
    A graph is weakly chordal if neither the graph nor the complement of the graph has an  induced  cycle  on  five or  more
        vertices. A simple graph $G$ on $[n]= \{1, 2, ... , n\}$ is  threshold, if  $G$ can be built sequentially from
        the empty graph by adding vertices one at a time, where each new vertex is either isolated (nonadjacent to all the
        previous) or dominant (connected to all the previous).  A graph $G$ is said to be mock threshold if there is a vertex
        ordering $v_1, . . . , v_n$ such that for every $i \ (1 \leq i \leq n)$ the degree of $v_i$ in $G : {v_1, . . . , v_i}$ is
        0, 1, $i-2$, or $i-1$. Mock threshold graphs are a simple generalization of threshold graphs that, like threshold
        graphs, are perfect graphs. Mock threshold graphs are perfect and indeed weakly chordal but not necessarily chordal
        ~\cite{bsz18}. Similarly, the complement of a mock threshold graph is also mock threshold.

        In the following, we prove the TCC for Mock Threshold graphs.

        \noindent \textbf{Note:} A total coloring of $K_n$ can be constructed as follows: (This total coloring is due to Hinz and Parisse ~\cite{20})

        When $n$ is even, we first construct an edge coloring of $K_n$ and extend it. We denote $[n]_0=\{0,1,2,...,n-1\} $. For $k\in [n]_0$,
        let $\tau_k$ be the transposition of $k$ and $n-1$ on $[n]_0$. For even $n$, $c_n(i,j)=(\tau_i(j)+\tau_j(i)+2)\mod
        (n+1)$, for $i,j \in [n]_0, i\neq j$, defines a $(n+1)$-edge coloring. In this coloring assignment line $k\in [n]_0$
        will have the missing colors $k$ and $(k+1)\mod n$. We color $c_n(i)=i$ for all $i\in [n]_0$.\vspace{0.3cm}

        When  $n$ is odd,  we use the same coloring of $K_{n-1}$. In the coloring assignment of $K_{n-1}$, still the color
        $(k+1) \mod n$ is missing in line $k\in [n]_0$. We use these colors to the edges incident with $n^{th}$ vertex and
        color $n$ to the $n^{th}$ vertex.

        \begin{thm} Total coloring conjecture holds for any   Mock threshold graph $G$.  \end{thm}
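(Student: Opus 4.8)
The plan is to use the vertex ordering that defines a mock threshold graph and build a total coloring inductively, adding one vertex at a time. Let $v_1,\ldots,v_n$ be a mock threshold ordering, so that in $G_i := G[\{v_1,\ldots,v_i\}]$ the vertex $v_i$ has degree $0$, $1$, $i-2$, or $i-1$. I would prove by induction on $i$ that $G_i$ admits a total coloring with $\Delta(G)+2$ colors (note $\Delta(G) \le n-1$, so $\Delta(G)+2$ colors are always available). The base case is trivial. For the inductive step I would take a $(\Delta(G)+2)$-total coloring of $G_{i-1}$ and extend it to $v_i$ according to which of the four degree values occurs.

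The two easy cases are degree $0$ (just give $v_i$ any color not used on $G_{i-1}$, which is possible since $G_{i-1}$ has at most $i-1 \le n-1 < \Delta(G)+2$ vertices and no edges at $v_i$) and degree $1$ (at most $3$ forbidden colors at $v_i$: the color of its unique neighbor, the new edge's two endpoint-colors; and that one new edge needs to avoid at most the colors already at its two ends, so a free color exists once $\Delta(G)+2 \ge 4$, i.e. $\Delta(G)\ge 2$, the small cases being handled directly). The substantive cases are degree $i-1$ (dominant vertex: $v_i$ is joined to all of $v_1,\ldots,v_{i-1}$) and degree $i-2$ (near-dominant: joined to all but one). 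Here the key observation is that a dominant vertex together with its incident edges forms, inside $G_i$, a copy of the "star-plus-vertex" structure that appears when one passes from $K_{i-1}$ to $K_i$, and near-dominant is that minus one edge. So I would invoke the Hinz--Parisse construction recalled in the Note: whenever $G_{i-1}$ happens to be colored in a way compatible with their scheme we can extend, and in general I would argue that the coloring of $G_{i-1}$ can be \emph{recolored} (using a Kempe-chain / color-permutation argument on the at most $i-1 \le \Delta(G)+1$ colors, leaving one color in reserve) so that each $v_j$, $j<i$, misses a distinct color among its incident edges, which frees those missing colors to color the new edges $v_jv_i$ and leaves a color for $v_i$ itself.

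The main obstacle is precisely the dominant/near-dominant step: when $v_i$ is dominant, adding it increases the degree of \emph{every} earlier vertex by one, so if some $v_j$ already had degree $\Delta(G)$ in $G_{i-1}$ it now has degree $\Delta(G)+1 = \Delta(G_i)$ potentially, and one must check the color count stays within $\Delta(G)+2$ and that a consistent assignment of the $i-1$ new edges exists — this is exactly a proper edge coloring of a star $K_{1,i-1}$ whose leaf-colors and center-color must dodge the already-present colors at each leaf, i.e. a system-of-distinct-representatives / Hall-type condition. I expect to handle this by choosing, for the recolored $G_{i-1}$, the specific Hinz--Parisse palette so that vertex $v_j$ (in their labeling the line $k$) misses color $(k+1)\bmod(\text{size})$; then the new edges receive exactly these missing colors and $v_i$ receives the one global color not yet used on vertices, exactly mirroring the odd-from-even extension in the Note. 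The degree-$(i-2)$ case is the same argument with one edge omitted, which only makes the Hall condition easier. Finally I would reconcile the bound: since $\Delta(G_i) \le \Delta(G)$ throughout except possibly at the last vertex, and the construction never uses more than $\max(\Delta(G_{i-1})+2,\ i) \le \Delta(G)+2$ colors, the induction closes and $\chi''(G) \le \Delta(G)+2$.
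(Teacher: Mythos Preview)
Your overall plan matches the paper's: induction along the mock-threshold ordering, splitting into the four degree cases $0,1,i-2,i-1$, with the first two cases being routine. The difference, and the gap in your proposal, is in the dominant and near-dominant cases.

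You propose to \emph{extend} the existing total coloring of $G_{i-1}$ by first recoloring it (via Kempe chains or color permutations) so that each $v_j$ misses a distinct color, and then assigning those distinct missing colors to the new edges $v_jv_i$. This recoloring step is not justified: achieving a system of \emph{distinct} missing colors at all $i-1$ vertices simultaneously is exactly the hard part, and a generic Kempe-chain argument does not produce it. Your Hall/SDR formulation does not save this either, since each vertex may have as few as two available colors and those sets can coincide, so Hall's condition can fail. You then write that one should ``choose the specific Hinz--Parisse palette'' for the recolored $G_{i-1}$, but that is not a recoloring of the given coloring---it is a wholesale replacement, and once you replace you are no longer extending anything.

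The paper's proof embraces precisely that replacement. In the dominant case ($d(v_{k+1})=k$) it \emph{discards} the inductive coloring entirely, embeds $G_{k+1}$ into $K_{k+1}$, totally colors $K_{k+1}$ (which is known to satisfy TCC), and restricts. In the near-dominant case ($d(v_{k+1})=k-1$) with $k$ even, it builds $K_k$ on the $k$ vertices other than the single non-neighbour $v_i$, uses the Hinz--Parisse total coloring of $K_k$ (in which each vertex has a distinct missing color by construction), and uses those missing colors for the edges to $v_i$; when $k$ is odd (so $|G_{k+1}|=k+1$ is even with maximum degree $k-1=|G_{k+1}|-2$) it invokes the Hilton / Chen--Fu theorem that every graph of order $2p$ and maximum degree $2p-2$ satisfies TCC. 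In particular, the inductive hypothesis is not actually used in Cases~3 and~4. You should drop the Kempe-chain/SDR attempt and argue these two cases from scratch via the complete-graph embedding, and you will also need to treat the odd-$k$ near-dominant subcase separately (your Hinz--Parisse extension, which mirrors the odd-from-even step in the Note, only covers the parity where the smaller complete graph has even order).
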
 \begin{proof} Consider the
            Mock threshold graph $G$ with vertex ordering  $v_1, v_2, ..., v_i,..., v_n$.

            We prove this theorem using mathematical induction on the induced subgraph $G[v_1,v_2,...,v_k]$.

            For $k\leq 4$, the maximum degree of all the induced subgraphs is less than or equal to 3. We know that a graph with
            maximum degree less than or equal to 3 satisfies TCC ~\cite{kos96}.\vspace{0.3cm}

            Let us assume that $G[v_1,v_2,...,v_k], k\geq 5$  satisfies TCC.\vspace{0.3cm}

            \noindent \textbf{Claim:} The graph $G[v_1,v_2,...,v_k,v_{k+1}]$ satisfies TCC.\vspace{0.3cm}

            The degree of the vertex $v_{k+1}$ in $G[v_1,v_2,...,v_{k+1}]$  can be $0, 1,k-1$ or $k$.

            \noindent Case-1: Suppose  $d(v_{k+1})=0$.

            In this case the vertex is $v_{k+1}$ is an isolated vertex.  By the induction assumption, $G[v_1,v_2,...,v_k,v_{k+1}]$
            satisfies TCC.

            \noindent Case-2: Suppose $d(v_{k+1})=1$.

            In this case, the vertex $v_{k+1}$ is adjacent to a vertex, say $v_i$, in $G[v_1, v_2, ..., v_k]$. Since
            $G[v_1,v_2,...,v_k]$ is total colorable graph with at most $\Delta(G[v_1,v_2,...,v_k])+2$ colors, at each vertex there
            will be at least one missing color.  We assign this missing color to the edge $(v_i,v_{k+1})$, and  for the vertex
            $v_{k+1}$, we assign a color of a vertex which is not adjacent to $v_{k+1}$ and not the color of $v_i$.  Therefore,
            $G[v_1, v_2, ..., v_{k+1}]$ satisfies TCC.

            \noindent Case-3: Suppose  $d(v_{k+1})=k-1$.

            Let us  assume that the vertex $v_{k+1}$ is not adjacent with $v_i$ and  also assume that \\ $\Delta(G[v_1, v_2, ...,
            v_{k+1}])=k-1$. We consider following two cases:

            \noindent Subcase-1:  $k$ is even.

            Since $k$ is even, $k+1$ is odd. Construct a complete graph induced by the vertices
            $v_1,v_2,...v_{i-1},v_{i+1},...,v_{k+1}$.  Now, color this complete graph using  colors in the set $\{0,1,..., n+1\}$
            as given in  the note.  In this coloring assignment there will be one missing color at each of the vertices and they
            are distinct. Now, color the edges  $(v_i,v_j), i\neq j, j=1,2,...,v_{k+1}$,  with the missing colors. Assign the
            color $n-1$ to the vertex $v_i$. To get a total coloring of $G[v_1,v_2,...,v_{k},v_{k+1}]$, we remove these edges and
            there is no change in the maximum degree.

            \noindent Subcase-2:  $k$ is odd.

            In this case $k+1$ is even, say $2p$. It is known that  a graph of order $2p$ with maximum degree $2p-2$ satisfies TCC
            (see ~\cite{hil90, chen92}).

            \noindent Case-4: Suppose $d(v_{k+1})=k$.

            The  maximum degree of  $G[v_1,v_2,...,v_k,v_{k+1}]$ is   $k$. Construct a complete graph  on the vertex set $\{v_1,
            v_2, ..., v_k,v_{k+1}\}$. We know that the complete graph satisfies TCC. After removing the added edges we get a
            total coloring of $G[v_1,v_2,...,v_k,v_{k+1}]$

            Hence, in all the cases, the mock threshold graph satisfies TCC.

    \end{proof}
    The Kneser graph $K(n,k)$ is the graph whose vertices correspond to the $k$-element subsets of a set of $n$ elements,
        and where two vertices are adjacent if and only if the two corresponding sets are disjoint. A vertex in the odd graph
        $O_n$  is a $(n - 1)$-element subset of a $(2n -1)$-element set. Two vertices are connected by an edge if and only if
        the corresponding subsets are disjoint. Note that the odd graphs are particular case of Kneser graphs.

        \begin{thm} Odd graph $O_n$ satisfies TCC.
        \end{thm}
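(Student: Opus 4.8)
The plan is to exhibit a total colouring of $O_n$ with at most $\Delta(O_n)+2=n+2$ colours. Write $[2n-1]=\{1,\dots,2n-1\}$; a vertex of $O_n$ is an $(n-1)$-subset $A\subseteq[2n-1]$, and $A$ is disjoint from exactly $\binom{(2n-1)-(n-1)}{n-1}=\binom{n}{n-1}=n$ such subsets, so $O_n$ is $n$-regular and $\Delta(O_n)=n$. If $n\le 5$ then $\Delta(O_n)\le 5$ and the TCC holds for $O_n$ by the result recalled in the Introduction that every graph of maximum degree at most $5$ satisfies it; so assume $n\ge 6$. Then three pairwise disjoint $(n-1)$-subsets of $[2n-1]$ cannot exist (they would need $3(n-1)\le 2n-1$), so $O_n$ is triangle-free, and one checks similarly that it is $C_4$-free, hence of girth at least $5$; it is also vertex-transitive. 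These are the properties I would build on.

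The construction starts from a canonical decomposition of $E(O_n)$. For $x\in[2n-1]$ put $M_x=\{AB\in E(O_n):x\notin A\cup B\}$. If $AB\in E(O_n)$ then $|A\cup B|=2n-2$, so $A\cup B$ omits a unique element $z(AB)$; hence $\{M_x\}_{x\in[2n-1]}$ partitions $E(O_n)$, each $M_x$ is a perfect matching of the subgraph induced on $\{A:x\notin A\}$, and a vertex $A$ lies on $M_x$ for exactly the $n$ indices $x\notin A$. Colouring the edge $AB$ with $z(AB)$ and the vertex $A$ with $\min A$ is already a valid total colouring: the edges at $A$ receive precisely the colours $[2n-1]\setminus A$, so the edge-colouring is proper and avoids $\min A$, and adjacent vertices $A,B$ have different minima since $A\cap B=\emptyset$. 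This gives $\chi''(O_n)\le 2n-1$, which already settles $n\le 3$; for $n\ge 6$ it remains to remove the $n-3$ surplus colours.

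Removing them is the heart of the matter, and I would reduce it to a list-colouring problem. By Vizing's theorem $O_n$ has a proper edge-colouring $\phi$ with palette $\{1,\dots,n+1\}$ in which all $n+1$ colours occur; since $O_n$ is $n$-regular, exactly one colour is missing at each vertex $A$, so the two-element list $L(A)=\{n+2\}\cup\{\text{the colour absent at }A\}$ is defined. Any proper colouring $c$ of $V(O_n)$ with $c(A)\in L(A)$ for all $A$ combines with $\phi$ into a total colouring using $\{1,\dots,n+2\}$: $\phi$ is proper on the edges, $c(A)$ is either the fresh colour $n+2$ or a colour absent at $A$, so no vertex clashes with an incident edge, and the condition $c(A)\ne c(B)$ for each edge $AB$ is exactly the requirement that $c$ be a proper list-colouring. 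Thus it suffices to produce an $(n+1)$-edge-colouring $\phi$ whose associated size-two lists are properly colourable; equivalently, letting $S$ be the set of vertices receiving colour $n+2$, one needs an independent set $S$ of $O_n$ meeting every edge whose two ends miss the same colour under $\phi$.

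The main obstacle is manufacturing such a $\phi$: a priori the conflict graph consisting of the edges $AB$ with $A,B$ missing the same colour could be too dense to be covered by an independent set. I would attack it by constructing $\phi$ explicitly from the matchings $M_x$ — amalgamating the $2n-1$ factors into $n+1$ classes and reassigning the vertex colours at the same time, using as scaffold the explicit Lov\'{a}sz--Kneser $3$-colouring $V_1=\{A:1\in A\}$, $V_2=\{A:1\notin A,\ 2\in A\}$, $V_3=\{A:1,2\notin A\}$, whose three independent parts are cut out by exactly the membership conditions that govern the $M_x$ — and then exploiting triangle-freeness and girth at least $5$ to force the conflict graph to be sparse (a union of short paths), so that the required independent cover can be read off greedily after handling finitely many exceptional local configurations. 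Verifying that this bookkeeping actually closes with $n+2$ colours is where the real work lies.
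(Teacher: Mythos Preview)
Your reduction to a list-colouring problem is sound: given a proper $(n+1)$-edge-colouring $\phi$, a proper vertex colouring from the size-two lists $L(A)=\{n+2,\ \text{colour missing at }A\}$ would indeed yield an $(n+2)$-total colouring. The gap is that you never establish such a list-colouring exists. You flag this yourself (``where the real work lies''), and the sketch you offer---amalgamating the canonical matchings $M_x$ into $n+1$ classes, invoking girth $\ge 5$, and hoping the conflict graph becomes ``a union of short paths'' amenable to a greedy cover---is not a proof. Nothing in your argument controls the global structure of the conflict graph (the edges $AB$ whose endpoints miss the same colour under $\phi$); for an arbitrary Vizing colouring it could contain long cycles or be otherwise intractable, and you give no explicit construction of $\phi$ with the claimed sparsity. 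As written, the proposal is a plan rather than a proof.

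The paper bypasses this entirely via a structural fact you did not use: fixing $x\in[2n-1]$, the complement $\bar I$ of the maximum independent set $I=\{A:x\in A\}$ induces a \emph{perfect matching} in $O_n$ (each $A\in\bar I$ has the unique neighbour $([2n-1]\setminus\{x\})\setminus A$ inside $\bar I$). The bipartite graph between $I$ and $\bar I$ then has maximum degree $n$ and is edge-coloured with $n$ colours by K\"onig; colour $n+1$ handles both the matching edges and all vertices of $I$; one endpoint of each matching edge gets colour $n+2$ and the other its unique missing colour from $\{1,\dots,n\}$. This last step is precisely the ``independent set meeting every conflict edge'' you were looking for, obtained for free from the matching structure rather than from an uncontrolled Vizing colouring.
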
 \begin{proof}

            Consider a $2n-1$ element set $X$. Let $O_n$ be an odd graph defined from the subsets of $X$. Let $x$ be any element of
            $X$. Then, among the vertices of $O_n$, exactly $ \binom {2n-2} {n-2}$ vertices correspond to sets that contain $x$.
            Because all these sets contain $x$, they are not disjoint, and form an independent set of $O_n$. That is, $O_n$ has
            $2n - 1$ different independent sets of size $ \binom {2n-2} {n-2}$.  Further, every maximum independent set must
            have this form, so, $O_n$ has exactly $2n - 1$ maximum independent sets.

            If $I$ is a maximum independent set, formed by the sets that contain $x$, then the complement of $I$ is the set of
            vertices that do not contain $x$. This complementary set induces a matching in $G$. Each vertex of the independent
            set is adjacent to $n$ vertices of the matching, and each vertex of the matching is adjacent to $n - 1$ vertices of
            the independent set ~\cite{god80}.

            Based on the decomposition, we give a total coloring of $O_n$ in the following way:

            Assign $n$ colors to the edges between the vertices in the maximum independent set $I$ and the vertices in the matching.
            Color the edges in matching and the vertices in $I$  with a new color. Color one set of vertices in the matching
            with another new color and the second set of vertices with the missing colors at these vertices.  This will give a
            total coloring of $O_n$ using at most $n+2=\Delta(O_n)+2$ colors.

        \end{proof}


\noindent \textbf{Acknowledgements:} This research work was supported by SERB, India (No.SERB: EMR/2017/001869).
 

\end{document}